\documentclass{amsart}
\usepackage{amssymb,amsmath, tikz, epsfig, float}
\usepackage{amsthm}
\usepackage{mathrsfs}
\usepackage{eucal}
\usepackage{enumitem, indentfirst}
\usepackage[fleqn,tbtags]{mathtools}
\usepackage{color}
\usepackage{cite}

\usepackage{color}
\usepackage{xcolor}
\usepackage{centernot}

\usepackage{mathrsfs}

 \theoremstyle{plain}

 \newtheorem{theorem}{Theorem}

  \numberwithin{equation}{section}

\theoremstyle{remark}

\newcommand{\hil}{\mathcal{H}}
\newcommand{\bh}{\mathbf{B}(\hil)}
\DeclarePairedDelimiterX\sip[2]{\langle}{\rangle}{#1\delimsize,#2}

\newcommand{\bit}{\begin{itemize}}
\newcommand{\eit}{\end{itemize}}
\newcommand{\ben}{\begin{enumerate}}
\newcommand{\een}{\end{enumerate}}
\newcommand{\be}{\begin{equation}}
\newcommand{\ee}{\end{equation}}
\newcommand{\ba}{\begin{array}}
\newcommand{\ea}{\end{array}}

\newcommand\cM{\mathcal M}
\newcommand\cN{\mathcal N}
\newcommand\cK{\mathcal K}

\newcommand{\fel}{1/2}

\newcommand{\rng}{\operatorname{ran}}
\newcommand{\ran}{\operatorname{ran}}

\newcommand{\lers}[1]{\left\{ #1 \right\}}

\newcommand{\nll}{\centernot{\ll}}

\newcommand{\ranfaf}{\rng\varphi(A)^{\fel}}

\newcommand{\Afel}{A^{\fel}}

\newcommand{\ranaf}{\rng A^{\fel}}
\newcommand{\rancf}{\rng C^{\fel}}
\newcommand{\ranbf}{\rng B^{\fel}}

\newcommand{\RM}{\mathcal{R}_{1/2}(\cM)}
\newcommand{\RTM}{\mathcal{R}_{1/2}(T(\cM))}
\newcommand{\ffi}{\varphi}
\renewcommand{\phi}{\varphi}
\newcommand{\fa}{\varphi(A)}
\newcommand{\fb}{\varphi(B)}

\newcommand{\lath}{\operatorname{Lat}(\hil)} 
\newcommand{\latoph}{\operatorname{Lat}_{\mathrm{op}}(\hil)}  
\newcommand{\lateh}{\operatorname{Lat}_{1}(\hil)}
\newcommand{\latkh}{\operatorname{Lat}_{2}(\hil)}
\newcommand{\latnh}{\operatorname{Lat}_{n}(\hil)}
\newcommand{\latcnh}{\operatorname{Lat}_{-n}(\hil)}
\newcommand{\latcegyh}{\operatorname{Lat}_{-1}(\hil)}
\newcommand{\codim}{\operatorname{codim}}

\newcommand{\bph}{\mathbf{B}_+(\hil)}
\newcommand{\seto}{\{0\}}

\newcommand{\qiff}{\quad\iff\quad}

\title[Maps preserving $<<$ and  $\perp$ of positive operators]{Maps preserving absolute continuity and singularity of positive operators}
                                                           
\author[Gy\"orgy P\'al Geh\'er]{Gy\"orgy P\'al Geh\'er}
\address{Gy\"orgy P\'al Geh\'er, Department of Mathematics and Statistics\\ University of Reading\\ Whiteknights\\ P.O.
Box 220\\ Reading RG6 6AX\\ United Kingdom}
\email{G.P.Geher@reading.ac.uk or gehergyuri@gmail.com \newline
\hspace{1.6cm} http://www.math.u-szeged.hu/\~{}gehergy}

\author[Zsigmond Tarcsay]{Zsigmond Tarcsay}
\address{Zsigmond Tarcsay, Department of Applied Analysis  and Computational Mathematics\\ E\"otv\"os Lor\'and University\\ P\'azm\'any P\'eter s\'et\'any 1/c.\\ Budapest H-1117\\ Hungary}

\email{tarcsay@cs.elte.hu\newline
http://tarcsay.web.elte.hu/}

\author[Tam\'as Titkos]{Tam\'as Titkos}
\address{Tam\'as Titkos, Alfr\'ed R\'enyi Institute of Mathematics\\ Hungarian Academy of Sciences\\ Re\'altanoda u. 13-15.\\
Budapest H-1053\\ Hungary\\ and BBS University of Applied Sciences\\ Alkotm\'any u. 9.\\
Budapest H-1054\\ Hungary}

\email{titkos.tamas@renyi.mta.hu \newline http://renyi.hu/\~{}titkos}

\begin{document}

\subjclass[2010]{Primary: 54E40; 46E27  Secondary: 60A10; 60B05}

\keywords{Positive operators, Absolute continuity, Singularity}

\thanks{Gy. P. Geh\'er was supported by the Leverhulme Trust Early Career Fellowship (ECF-2018-125), and also by the Hungarian National Research, Development and Innovation Office (Grant no. K115383).}
\thanks{Zs. Tarcsay was supported by DAAD-TEMPUS Cooperation Project ``Harmonic Analysis and Extremal Problems'' (grant no. 308015) and by Thematic Excellence Programme, Industry and Digitization Subprogramme, NRDI Office, 2019.}
\thanks{T. Titkos was supported by the Hungarian National Research, Development and Innovation Office  NKFIH (grant no. PD128374 and grant no. K115383), by the J\'anos Bolyai Research Scholarship of the Hungarian Academy of Sciences, and by the \'UNKP-19-4-BGE-1 New National Excellence Program of the Ministry for Innovation and Technology
.}

\begin{abstract}
In this paper we consider the cone of all positive, bounded operators acting on an infinite dimensional, complex Hilbert space, and examine bijective maps that preserve absolute continuity in both directions. 
It turns out that these maps are exactly those that preserve singularity in both directions. 
Moreover, in some weak sense, such maps are always induced by bounded, invertible, linear- or conjugate linear operators of the underlying Hilbert space. 
Our result gives a possible generalization of a recent theorem of Molnar which characterizes maps on the positive cone that preserve the Lebesgue decomposition of operators.
 
\end{abstract}
\maketitle

\section{Introduction} 
Throughout this paper $\hil$ will denote a complex infinite dimensional Hilbert space, unless specifically stated otherwise, with the inner product $(\cdot\,|\,\cdot)$. The symbols $\bh$ and $\bph$ will stand for the set of all bounded operators and the cone of all positive operators, respectively. Motivated by their measure theoretic analogues, Ando introduced the notion of absolute continuity and singularity of positive operators in \cite{andoeq}, and proved a Lebesgue decomposition theorem in the context of $\bph$. Since then similar results have been proved in more general contexts, we only mention a few of them: \cite{HSdS,JuryMartin,KosakiLebesgue,TZS-Leb,pems,Voiculescu}.

\par Given a mathematical structure and an important operation/quantity/relation corresponding to it, a natural question to ask is: how can we describe all maps that respect this operation/quantity/relation? Such and similar problems belong to the gradually enlarging field of \emph{preserver problems}, the interested reader is referred to the survey papers \cite{GLS,LT,LP} for an introduction. A considerable part of preserver problems is related to operator structures, for which we refer to the book of Moln\'ar \cite{MolnarBook} and the reference therein. 

\par In this paper our goal is to generalize Moln\'ar's result \cite[Theorem 1.1]{MolnarLebesgue} about the structure of bijective maps on $\bph$ that preserve the Lebesgue decomposition in both directions. Moln\'ar proved that the cone is quite rigid in the sense that these maps can be always written in the form $$A\mapsto SAS^*$$ with a bounded, invertible, linear- or conjugate linear operator $S\colon \hil\to\hil$. A natural question arises: how can we describe the form of those bijections that preserve absolute continuity (or singularity) of operators in both directions? Clearly, this is a weaker condition than that of Moln\'ar, hence maps considered by Moln\'ar obviously preserve this relation. However, it is not too hard to construct other maps which preserve absolute continuity. For example, one could use the fact that every positive operator is absolutely continuous with respect to every invertible element of $\bph$, and that invertible elements are the only ones with this property. Therefore, if we leave all positive and not invertible operators fixed, and consider an arbitrary bijection on the subset of invertible and positive operators, then this map preserves absolute continuity in both directions. Despite the existence of such seemingly unstructured maps, it is still possible to describe all maps with this weaker preserver property.

\section{Technical preliminaries}

We say that a bounded linear operator $A\colon \hil\to\hil$ is positive if $(Ax\,|\,x)\geq0$ holds for all $x\in \hil$. This notion induces a partial order on $\bph$, that is, $A\leq B$ if $B-A\in\bph$. Two positive operators $A,B\in\bph$ are said to be \emph{singular}, $A\perp B$ in notation, if the only element $C\in\bph$ with $C\leq A$ and $C\leq B$ is the zero operator. It turns out that this relation can be phrased in terms of the ranges of the positive square roots (see \cite[p. 256]{andoeq}):
\begin{equation}\label{d:singrng}
A\perp B\qiff\ranaf\cap\ranbf=\seto.
\end{equation}

Next, $A$ is said to be \emph{$B$-dominated} if there exists a $c\geq0$ such that $A\leq cB$. If $A$ can be approximated by a monotone increasing sequence of $B$-dominated operators in the strong operator topology then, we say that \emph{$A$ is $B$-absolutely continuous}, and we write $A\ll B$. Observe that this definition of absolute continuity combined with the Douglas factorization theorem \cite[Theorem 1]{douglas} yields 
\begin{equation*}
    A\ll B\quad\Longrightarrow \quad \overline{\ran A} \subseteq \overline{\ran B},
\end{equation*}
however, the converse implication is not true in general (see e.g. \cite[Example 3]{TZS-Leb}). A characterization of absolute continuity by means of operator ranges reads as follows (see \cite[Theorem 5]{andoeq}):
\begin{equation}\label{d:acrng}
A\ll B\qiff \{x\in\hil : \Afel x\in\ranbf\}\;\mbox{is dense in}\;\hil.
\end{equation}
If $B$ has closed range, then the range-type characterization of absolute continuity takes a much simpler form:
\begin{equation}\label{E:clranchar}
    A\ll B\qiff \ran A\subseteq \ran B, \qquad \mbox{provided that } \ran B = \overline{\ran B}.
\end{equation}

In this paper we are going to investigate singularity and absolute continuity preserving bijections. We say that a bijective map $\ffi:\bph\to\bph$ preserves absolute continuity in both directions if
\begin{equation*}
    A\ll B\qiff\ffi(A)\ll\ffi(B)\qquad\mbox{for all}\;A,B\in\bph.
\end{equation*}
Similarly, we say that a bijection $\ffi:\bph\to\bph$ preserves singularity in both directions if
\begin{equation*}
    A\perp B\qiff\ffi(A)\perp\ffi(B)\qquad\mbox{for all}\;A,B\in\bph.
\end{equation*}

To formulate our results, we need some further notation. With calligraphic letters we always denote linear (not necessarily closed) subspaces of $\hil$ and we use the symbol $\lath$ for the set of all subspaces. A special subset of $\lath$ formed by operator ranges is denoted by 
\begin{equation*}
    \latoph\coloneqq \lers{\cM\subseteq\hil:\exists\; S \in\bh,~\ran S=\cM}=\{\ran A^{1/2}: A\in\bph\},
\end{equation*}
where the second identity is due to the range equality
\begin{equation}\label{E:range}
\ran S=\ran (SS^*)^{1/2}\quad \mbox{for all }S\in\bh.
\end{equation}
It is known that $\latoph$ forms a lattice and that $\latoph\subsetneqq \lath$, for more information see \cite{FW p260corollary2}.

For every positive integer $n$ we set $\latnh$ and $\latcnh$ to be the set of all $n$-dimensional and $n$-codimensional operator ranges, respectively:
\bit
\item[] $\latnh\coloneqq\big\{\cM\in\latoph : \dim\cM=n\big\}=\big\{\cM\in\lath : \dim\cM=n\big\}$,
\item[] $\latcnh\coloneqq\big\{\cM\in\latoph : \codim\cM=n\big\}$.
\eit
Observe also that $\latcnh$ consists of all $n$ codimensional \emph{closed} subspaces of $\hil$. We use the symbol $\mathbf{B}^n_+(\hil)$ to denote the set of all bounded positive operators with $n$ dimensional range.
We also introduce the following subset of $\bph$ which is associated with an operator range $\cM\in\latoph$:
\begin{equation*}
    \RM\coloneqq \lers{C\in\bph : \rancf=\cM}.
\end{equation*}
Note that $\RM$ is never empty according to \eqref{E:range}. 

\section{Main Theorem}

In this section we state and prove our main result. We give a complete description of bijections that preserve absolute continuity in both directions, and of those that preserve singularity in both directions. It turns out that these maps have the same structure.

\begin{theorem}\label{T:maintheorem}
Let $\hil$ be an infinite dimensional complex Hilbert space and assume that $\phi:\bph\to\bph$ is a bijective map. Then the following four statements are equivalent:
\begin{enumerate}[label=\textup{(\roman*)}]
    \item $\phi$ preserves absolutely continuity in both directions,
    \item $\phi$ preserves singularity in both directions,
    \item there exists a bounded, invertible, linear- or conjugate linear operator $T:\hil\to\hil$ such that

\begin{equation}\label{E:property(iii)}
    \ran \phi(A)^{1/2} = \ran T A^{1/2}\qquad \mbox{for all }A\in\bph,
\end{equation}
    \item there exists a bounded, invertible, linear- or conjugate linear operator $T:\hil\to\hil$ and a family $\{Z_A : A\in\bph\}$ of invertible positive operators such that 
    \begin{equation}\label{E:SAS}
        \phi(A)=(TAT^*)^{1/2}Z_A(TAT^*)^{1/2}\qquad\mbox{for all }
         A\in\bph.
    \end{equation}
\end{enumerate}
\end{theorem}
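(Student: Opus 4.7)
The plan is to prove the cycle of implications (iv) $\Leftrightarrow$ (iii), (iii) $\Rightarrow$ (i), (iii) $\Rightarrow$ (ii), and then the hard direction (i) $\Rightarrow$ (iii) and (ii) $\Rightarrow$ (iii). The equivalence (iv) $\Leftrightarrow$ (iii) reduces to an operator-theoretic identity: applying \eqref{E:range} to $S = TA^{\fel}$ yields $\ran TA^{\fel} = \ran(TAT^*)^{\fel}$, so (iii) rephrases as $\ran\phi(A)^{\fel} = \ran(TAT^*)^{\fel}$; a standard consequence of Douglas factorization then shows that $\ran\phi(A)^{\fel}=\ran(TAT^*)^{\fel}$ if and only if $\phi(A) = (TAT^*)^{\fel} Z_A (TAT^*)^{\fel}$ for some invertible positive $Z_A$. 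For (iii) $\Rightarrow$ (i), (ii), observe that a bounded invertible linear or conjugate linear $T$ bijects $\latoph$ onto itself and respects intersections, so singularity preservation is immediate from \eqref{d:singrng}; for absolute continuity preservation, the operation $A \mapsto TAT^*$ respects monotone strong limits and $B$-domination, hence respects the defining property of $\ll$.

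For the hard direction I would first show that under either hypothesis $\phi$ respects the equivalence relation $A\approx B :\Leftrightarrow \ran A^{\fel}=\ran B^{\fel}$ and descends to a bijection $\Phi:\latoph\to\latoph$ via $\Phi(\ran A^{\fel}):=\ran\phi(A)^{\fel}$. The key tool is a detection lemma for rank-one operators: direct calculation gives $uu^*\ll A \Leftrightarrow u\in\ran A^{\fel}$ and, dually, $uu^*\perp A \Leftrightarrow u\notin\ran A^{\fel}$, so $\ran A^{\fel}$ is determined purely by the $\ll$-neighborhood (respectively $\perp$-neighborhood) of $A$, and any bijection preserving these must preserve $\approx$.

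Next, $\Phi$ is cast into the framework of the Fundamental Theorem of Projective Geometry. Order-theoretic arguments inside $\latoph$ identify each finite-dimensional stratum $\latnh$: for example, $\lateh$ coincides with the atoms of the $\ll$-induced order on $\approx$-classes (the only nonzero classes strictly below a one-dimensional class are excluded by the Douglas-type inclusion $\overline{\ran B} \subseteq \overline{\ran A}$), and higher strata follow by chain-length arguments. Hence $\Phi$ restricts to a bijection $\widetilde{\Phi}:\lateh\to\lateh$ on projective lines. For $\cN\in\latkh$ and any $A_\cN\in\bph$ with $\ran A_\cN^{\fel}=\cN$, the detection formula gives $\{\spn{u}:uu^*\ll A_\cN\}=\{\spn{u}:u\in\cN\}$, so $\widetilde{\Phi}$ maps the one-dimensional subspaces of $\cN$ bijectively onto those of $\Phi(\cN)$. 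This is exactly the collinearity-preservation hypothesis of the Fundamental Theorem of Projective Geometry, which produces a bijective semilinear $T:\hil\to\hil$, forced to be linear or conjugate linear over $\C$, with $\widetilde{\Phi}(\spn{u})=\spn{Tu}$ for every nonzero $u$. Once $T$ is known to be bounded with bounded inverse, the identity $\Phi(\cM)=T(\cM)$ extends automatically from $\lateh$ to all of $\latoph$ using the characterization $\cM=\{u\in\hil:uu^*\ll A\}$ for any representative $A$ of the $\approx$-class, and rewriting via \eqref{E:range} yields (iii).

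The principal obstacle I anticipate is promoting the set-theoretic semilinear $T$ supplied by the Fundamental Theorem of Projective Geometry to a bounded, boundedly invertible operator. Semilinearity alone does not yield continuity, so boundedness must be extracted by an auxiliary argument --- typically an appeal to the closed graph theorem after showing that $\phi$ respects some topological datum, for instance preservation of closed-range positive operators or of suitable gap limits on $\latoph$. A secondary difficulty is verifying that the raw preservation of $\ll$ (or of $\perp$) genuinely forces $\Phi$ to respect the higher-dimensional finite strata in the precise way required by the Fundamental Theorem, especially in the coarser $\perp$-case where singularity is a much weaker relation than absolute continuity.
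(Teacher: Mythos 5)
Your overall strategy coincides with the paper's: descend to a bijection $\Phi$ of $\latoph$, apply the fundamental theorem of projective geometry, and upgrade the resulting map to a bounded linear or conjugate linear operator. Your detection lemma ($uu^*\ll A\iff u\in\ran A^{1/2}$ and $uu^*\perp A\iff u\notin\ran A^{1/2}$) is correct and is essentially how the paper reduces everything to ranges of square roots. However, there is a genuine gap at the step you yourself flag as the ``principal obstacle,'' and it is compounded by an incorrect assertion just before it: the fundamental theorem of projective geometry does \emph{not} force $T$ to be linear or conjugate linear over $\C$; it only produces a bijection that is semilinear with respect to \emph{some} automorphism of $\C$, and $\C$ admits uncountably many discontinuous automorphisms. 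So both the linearity/conjugate-linearity and the boundedness of $T$ require an argument you do not supply. The mechanism the paper uses is the observation that a finite-codimensional operator range is necessarily closed, so $\Phi$ (which preserves codimension-one elements of $\latoph$) maps the set of closed one-codimensional subspaces onto itself; by a lemma of Kakutani and Mackey this forces the field automorphism to be the identity or conjugation, and then boundedness of $T$ follows because for every bounded functional $y^*$ one has $\ker(y^*\circ T)=\ker x^*$ for some bounded $x^*$, whence $y^*\circ T=\lambda x^*$ is bounded. Your proposed alternative (closed graph theorem after showing $\phi$ preserves ``some topological datum'') is not worked out and would require exactly this kind of input.

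A secondary, smaller gap: in the $\perp$-case you acknowledge but do not resolve how preservation of singularity alone identifies the finite-dimensional strata of $\latoph$. The paper does this by first noting that preservation of zero intersection already yields preservation of inclusion, via
\begin{equation*}
\cM\subseteq\cN \iff \lers{\cK\in\latoph:\cK\cap\cN=\seto}\subseteq\lers{\cK\in\latoph:\cK\cap\cM=\seto},
\end{equation*}
after which $\lateh$ and $\latkh$ are characterized order-theoretically. Also, in your (iii)$\Rightarrow$(i) step you implicitly need that the relation $A\ll B$ depends only on $\ran A^{1/2}$ and $\ran B^{1/2}$ (which is true, by \eqref{d:acrng} together with the Fillmore--Williams factorization), since (iii) controls only ranges and not the operators themselves; the paper makes this explicit in its (iv)$\Rightarrow$(i) argument.
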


\begin{proof}
(i)$\Longrightarrow$(ii): Notice that (i) implies $\phi(0) = 0$, since $0$ is the only element in $\bph$ which is $B$-absolutely continuous for all positive operator $B$. Moreover, it is easy to see that we have $A\in\mathbf{B}^1_+(\hil)$ if and only if $$\{C\in\bph : C\ll A, A\nll C\}=\{0\},$$ hence $\phi(\mathbf{B}^1_+(\hil)) = \mathbf{B}^1_+(\hil)$. Assume that $\phi$ satisfies (i) but not (ii), hence there exist $A,B\in\bph$ such that $A\perp B$ but $\phi(A)\not\perp\phi(B)$. In particular, this means that there exists a non-zero vector $f\in \hil$ such that $f\otimes f\leq \phi(A)$ and $f\otimes f\leq \phi(B)$, and hence
$$f\otimes f\ll \phi(A)\qquad\mbox{and}\qquad f\otimes f\ll \phi(B).$$
Since $f\otimes f=\phi(e\otimes e)$ holds with some non-zero vector $e\in \hil$, we obtain
\begin{equation*}
    e\otimes e\ll A\qquad\mbox{and}\qquad e\otimes e\ll B.
\end{equation*}
But this implies $e\in\ran A^{1/2}\cap \ran B^{1/2}$, hence $A\not\perp B$, which is a contradiction.  

(ii)$\Longrightarrow$(iii): The first step is to reformulate the singularity preserving property in terms of operator ranges. For any positive operator $A$ we define the set $$A^{\perp}\coloneqq \{C\in\bph:C\perp A\}.$$
From \eqref{d:singrng} it follows easily that 
\begin{equation*}
    A^\perp=B^\perp\qiff \ran A^{1/2}=\ran B^{1/2} \qquad\mbox{for all}\;A,B\in\bph.
\end{equation*}
Consequently, $\phi$ satisfies
\begin{equation}\label{E:rangepreserve}
\ran A^{1/2}=\ran B^{1/2}\qiff\ran \fa^{1/2}=\ran \fb^{1/2}
\end{equation}
for all $A,B\in\bph$.
We introduce the following map:
\begin{equation*}
\Phi:\latoph\to\latoph, \quad \Phi(\ranaf)\coloneqq\ranfaf,    
\end{equation*}
which is obviously well-defined and bijective. 
From \eqref{d:singrng} and \eqref{E:rangepreserve} it is immediate that $\Phi$ preserves ``zero intersection'' in both directions, i.e., 
$$\cM\cap\cN=\seto\qiff\Phi(\cM)\cap\Phi(\cN)=\seto.$$

Next, our task is to understand $\Phi$. We easily see that 
\begin{equation*}
\cM\subseteq\cN \;\iff\; \lers{\cK\in\latoph:\cK\cap\cN=\seto} \subseteq \lers{\cK\in\latoph:\cK\cap\cM=\seto},
\end{equation*}
and thus $\Phi$ preserves inclusion in both directions:
\begin{equation}\label{E:msubn}
\cM\subseteq\cN\qiff\Phi(\cM)\subseteq\Phi(\cN).
\end{equation}
In particular this implies that $\Phi(\{0\}) = \{0\}$ and $\Phi(\hil) = \hil$.
Notice that we have 
$$\dim\cM=1\qiff\lers{\cN\in\latoph :\cN\subseteq\cM}=\lers{\seto,\cM},$$
hence the restriction $\Phi|_{\lateh}$ is a bijection of $\lateh$ onto itself.
Similarly, we have
$$\dim\cM=2\qiff\big\{\cN:\cN\subsetneqq\cM\big\}\subseteq\lateh\cup\big\{\seto\big\},$$
therefore $\Phi|_{\latkh}\colon\latkh\to\latkh$ is also a bijection. 
Combining these observations, we conclude that $\Phi|_{\lateh}$ is a projectivity, that is, $\Phi$ maps any three coplanar elements to coplanar elements. 
Therefore the fundamental theorem of projective geometry (see  e.g. \cite{Faure2}) can be applied: there exists a semilinear bijection $T:\hil\to\hil$ such that
\begin{equation*}\label{f:semilin}
\Phi(\cM)=T(\cM),\qquad\mbox{for all }\cM\in\lateh.
\end{equation*}

Now, we examine how $\Phi$ acts on a general $\cM\in\latoph\setminus\seto$. By the above properties, for all $\cN\in\lateh$ and $\cM\in\latoph$ we have
$$\cN\subseteq\cM\qiff T(\cN)\subseteq\Phi(\cM)$$
and
$$\cM\cap\cN=\seto\qiff T(\cN)\cap\Phi(\cM)=\seto.$$ Therefore, for all $\cM\in\latoph\setminus\seto$ we have
\begin{align*}
    \Phi(\cM)=\bigcup_{\substack{T(\cN)\subseteq\Phi(\cM),\\ T(\cN)\in\lateh}}T(\cN)=\bigcup_{\substack{\cN\subseteq\cM,\\ \cN\in\lateh}}T(\cN)=T\left(\bigcup_{\substack{\cN\subseteq\cM,\\ \cN\in\lateh}}\cN\right)=T(\cM),
\end{align*}
hence, by the definition of $\Phi$ and $T$ we obtain that
$$\ffi[\RM]=\RTM \qquad \mbox{for all }\cM\in\latoph.$$

All that remains is to prove that the semilinear map $T$ is either linear- or conjugate linear, and bounded.  It is immediate that $T$ and $T^{-1}$ map one-codimensional linear manifolds into one-codimensional ones. Furthermore, a finite codimensional subspace of $\hil$ is an operator range if and only if it is closed, so we infer that  $T$ maps $\latcegyh$ onto $\latcegyh$. Since  $\hil$ is infinite dimensional, we can use \cite[Lemma 2 and its Corollary]{KM}  to conclude that $T$ is either linear- or conjugate linear. Finally, to  show that $T$ is bounded it suffices to prove that $y^*\circ T$ is bounded for every bounded linear functional $y^*\in \hil^*$. Suppose first that $T$ is linear. Since $T$ maps $\latcegyh$ onto $\latcegyh$, there is $x^*\in \hil^*$ such that $\ker y^*=T(\ker x^*)$. Consequently, $\ker x^*= \ker (y^*\circ T)$, which implies $y^*\circ T=\lambda x^*$ for some $\lambda$, and hence $y^*\circ T$ is bounded. A very similar approach applies when $T$ is conjugate linear.

(iii)$\Longrightarrow$(iv): First, assume that $T$ is linear. Then by \eqref{E:range} we obtain
\begin{equation}\label{E:ranTAT}
    \ran \phi(A)^{1/2}=\ran TA^{1/2}=\ran (TAT^*)^{1/2}\qquad \mbox{for all $A\in\bph$}.
\end{equation}
Hence by \cite[Corollary 1 on p.259]{FW p260corollary2} we have $\phi(A)^{1/2}=(TAT^*)^{1/2}X_A$ with some invertible operator $X_A\in\bh$. Therefore \eqref{E:SAS} clearly holds with $Z_A = X^{}_AX_A^*$.

Assume now that $T$ is conjugate linear. Consider an arbitrary antiunitary operator $U:\hil\to\hil$. Then
$$ \ran \phi(A)^{1/2} = \ran  T A^{1/2} = \ran TA^{1/2}U = \ran (TAT^*)^{1/2}$$
for all $A\in\bph$, where in the last step we used \eqref{E:range} for the linear bounded operator $T A^{1/2}U$. From here we finish the proof as in the linear case. 

(iv)$\Longrightarrow$(i): By \eqref{E:range} we have
\begin{equation*}
    \ran \phi(A)^{1/2}=\ran (TAT^*)^{1/2}Z_A^{1/2}=\ran (TAT^*)^{1/2}=\ran TA^{1/2}=\ran TA^{1/2}T^*
\end{equation*}
for all $A\in\bph$. Thus by \cite[Corollary 1 on p.259]{FW p260corollary2}, there exists an invertible operator $Y_A\in\bh$ such that
\begin{equation*}
    \phi(A)^{1/2}=TA^{1/2}T^*Y_A.
\end{equation*}
If we introduce the notation $\mathscr{D}_{A,B}\coloneqq \{x\in\hil : A^{1/2}x\in\ran B^{1/2}\}$
for every pair $A,B\in\bph$, then an immediate calculation shows that 
\begin{equation*}
    \mathscr{D}_{\phi(A),\phi(B)}= (T^*Y_A)^{-1}(\mathscr{D}_{A,B})
\end{equation*}
from which it follows that $\mathscr{D}_{A,B}$ is dense if and only if $\mathscr{D}_{\phi(A),\phi(B)}$ is dense. By \eqref{d:acrng} this implies (i). 
\end{proof}

\section{The finite dimensional case}
If $\dim\hil <\infty$, then $\lath=\latoph$, every operator has closed range, and $\ran A=\ran A^{1/2}$ holds for all $A\in\bph$. Therefore the notions of absolute continuity and singularity simplify considerably.
In particular, the characterization \eqref{E:clranchar} of absolute continuity is valid for every pair $A,B$ of positive operators. Similarly, the range characterization of  singularity reduces to   
\begin{equation*}
    A\perp B\qiff \ran A\cap \ran B=\{0\}.
\end{equation*}
Furthermore, we have $\RM=\{C\in\bph : \ran C=\cM\}$ for all $\cM\in\lath$. 
Therefore the finite dimensional version of Theorem A can be proved much more easily using the fundamental theorem of projective geometry provided that $\dim H > 2$. However, we point out that the result we get is slightly different, as $T$ is not necessarily linear- or conjugate linear anymore. We omit the proof.

\begin{theorem}\label{T:finitedim}
Let $\hil$ be a complex Hilbert space such that $3\leq \dim \hil< +\infty$ and let $\phi:\bph\to\bph$ be a bijective map. Then the following three statements are equivalent:
\begin{enumerate}[label=\textup{(\roman*)}]
    \item $\phi$ preserves absolutely continuity in both directions,
    \item $\phi$ preserves singularity in both directions,
    \item there is a semilinear bijection $T:\hil\to\hil$ such that
    $$ \ran \phi(A)=\ran TA \quad \mbox{for all } A\in\bph.$$
\end{enumerate}
\end{theorem}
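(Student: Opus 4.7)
My plan is to establish $(i)\Rightarrow(ii)\Rightarrow(iii)\Rightarrow(i)$, distilling the proof of Theorem~A to its skeleton; the finite dimensional simplifications noted at the start of the section make most of the technicalities evaporate.

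For $(i)\Rightarrow(ii)$ I would copy the argument from Theorem~A verbatim. One shows that $\phi(0)=0$ since $0$ is the unique element $\ll$-below everything, and that $\phi$ preserves the rank-one cone $\mathbf{B}^1_+(\hil)$ via the purely order-theoretic characterization $\lers{C\in\bph:C\ll A,~A\nll C}=\seto$. If singularity preservation failed, some $f\otimes f$ with $f\neq 0$ would sit below both $\phi(A)$ and $\phi(B)$; pulling back through $\phi^{-1}$ gives $e\otimes e\leq A,B$, contradicting $A\perp B$ via the range identity $A\perp B\iff\ran A\cap \ran B=\seto$.

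For $(ii)\Rightarrow(iii)$ I would set $\Phi\colon\lath\to\lath$, $\Phi(\ran A)\coloneqq \ran\phi(A)$; this is well-defined and bijective because in finite dimensions $A^\perp=B^\perp\iff \ran A=\ran B$. Translating singularity preservation, $\Phi$ preserves zero intersection in both directions, and the same two-line set-theoretic trick as in Theorem~A promotes this to inclusion preservation in both directions. The dimension characterizations
$$\dim\cM=1\iff\lers{\cN\in\lath:\cN\subseteq\cM}=\lers{\seto,\cM}$$
and its two-dimensional analogue force $\Phi$ to restrict to bijections of $\lateh$ and of $\latkh$; in particular, $\Phi|_{\lateh}$ sends coplanar triples to coplanar triples, i.e.\ is a projectivity. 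Because $\dim\hil\geq 3$, the fundamental theorem of projective geometry produces a semilinear bijection $T\colon\hil\to\hil$ with $\Phi(\cM)=T(\cM)$ on lines. The union-of-lines computation from Theorem~A extends this to $\Phi(\cM)=T(\cM)$ on all of $\lath$, giving $\ran \phi(A)=T(\ran A)=\ran TA$. Unlike in Theorem~A there is no need to upgrade $T$ to linear or conjugate linear; indeed the theorem claims only semilinearity.

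For $(iii)\Rightarrow(i)$, closedness of all ranges combined with \eqref{E:clranchar} gives $A\ll B\iff \ran A\subseteq \ran B$, and bijectivity of $T$ transfers this across $\phi$:
$$\phi(A)\ll\phi(B)\iff T(\ran A)\subseteq T(\ran B)\iff \ran A\subseteq \ran B\iff A\ll B.$$
I expect no genuine obstacle beyond bookkeeping. The substantive role of the hypothesis $\dim\hil\geq 3$ is that the fundamental theorem of projective geometry has no content on a projective line, so the step producing $T$ would collapse in $\dim\hil=2$. The conclusion of Theorem~B is genuinely weaker than that of Theorem~A precisely because the KM-style lemma used to force $T$ to be linear or conjugate linear in the infinite-dimensional setting relies on infinite-dimensional features of $\hil$ and has no finite-dimensional counterpart; the best one can hope for here is semilinearity with respect to an arbitrary field automorphism of $\C$.
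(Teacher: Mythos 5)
Your proposal is correct and follows exactly the route the paper intends (the paper omits the proof but indicates precisely this strategy: run the Theorem~A argument with the finite-dimensional simplifications and invoke the fundamental theorem of projective geometry, which is why $\dim\hil\geq 3$ is needed and why $T$ is only semilinear). All the key steps — the order-theoretic detection of $0$ and of rank-one operators, the passage to the range map $\Phi$ on $\lath$, the upgrade from zero-intersection preservation to inclusion preservation, and the closed-range characterization of $\ll$ for the final implication — check out.
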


Finally, in case when $\dim \hil = 2$, the fundamental theorem of projective geometry cannot be applied. However, one can prove easily that points (i) and (ii) are both equivalent with the following condition:
\begin{itemize}
    \item[(iii')] $\phi(0) = 0$, $\phi$ maps the set of all invertible positive operators bijectively onto itself, and there is a bijection $\Psi\colon \lateh\to\lateh$ such that 
    $$\ran \phi(A) = \Psi(\ran A) \quad \mbox{for all } A\in\mathbf{B}^1_+(\hil).$$
\end{itemize}
\section*{Acknowledgement}
We would like to thank the referee for his/her helpful comments on the paper.


\begin{thebibliography}{99}

\bibitem{andoeq}
T. Ando, Lebesgue-type decomposition of positive operators, \emph{Acta Sci. Math. (Szeged)}, 38 253--
260, 1976.



\bibitem{douglas}
R. G. Douglas, On Majorization, Factorization, and Range Inclusion of Operators on Hilbert Space, \emph{Proc. of the Amer. Math. Soc.}, Vol. 17, No. 2 (1966), 413--415.





\bibitem{Faure2}
C.-A. Faure, An elementary proof of the fundamental theorem of projective geometry, \emph{Geom. Dedicata}, 90 (2002), 145--151.

\bibitem{FW p260corollary2}
P. A. Fillmore,  J. P. Williams,
On Operator Ranges, \emph{Advances in Mathematics}, 254--281 (1971).


\bibitem{GLS}
A. Guterman, C.K. Li, and P. Semrl, Some general techniques on linear preserver problems,
\emph{Linear Algebra Appl.}, 315:61--81, 2000.

\bibitem{HSdS}
S. Hassi, Z. Sebesty\'en, and H. de Snoo, Lebesgue type decompositions for nonnegative forms, \emph{J. Funct. Anal.}, 257 (2009), no. 12, 3858--3894.

\bibitem{JuryMartin}
M.~T.~Jury, R.~T.~W.~Martin, Fatou's Theorem for Non-commutative Measures, arXiv:1907.09590


\bibitem{KM}
S. Kakutani, G. W. Mackey, Ring and lattice characterizations of complex Hilbert space, \emph{Bull. Amer. Math. Soc.}, Volume 52, Number 8 (1946), 727--733.


\bibitem{KosakiLebesgue}
H. Kosaki, Lebesgue decomposition of states on a von Neumann algebra, \emph{Amer. J. Math.}, 107 (1985), no. 3, 697--735.

\bibitem{LT}
C.-K. Li, N.-K. Tsing, Linear preserver problems: A brief introduction and some special techniques, \emph{Linear Algebra  Appl.},  Volumes 162–164,  (1992), 217--235.

\bibitem{LP}
C.-K. Li, S. Pierce, Linear preserver problems, \emph{Amer. Math. Monthly}, 108 (2001) 591–-605.

\bibitem{MolnarBook}
L. Moln\'ar, Selected preserver problems on algebraic structures of linear operators and on function spaces. Lecture Notes in Mathematics, 1895. Springer-Verlag, Berlin, 2007. xiv+232 pp. ISBN: 978-3-540-39944-5; 3-540-39944-5

\bibitem{MolnarLebesgue}
L. Moln\'ar, Maps on positive operators preserving Lebesgue decompositions, \emph{Electron. J. Linear Algebra}, 18 (2009), 222--232.

\bibitem{TZS-Leb}
Zs. Tarcsay, Lebesgue-type decomposition of positive operators, \emph{Positivity}, 17 (2013), 803--817.

\bibitem{pems} 
T. Titkos, Arlinskii's iteration and its applications, \emph{Proc. Edinb. Math. Soc.},  62 (2019), no. 1, 125--133.

\bibitem{Voiculescu} D.-V. Voiculescu, Lebesgue decomposition of functionals and unique preduals for commutants modulo normed ideals, \emph{Houston J. Math.}, 43 (2017), no. 4, 1251--1262. 
\end{thebibliography}
\end{document}